\newcommand\inverse{{^{-1}}}
\newcommand{\FF}{{\mathbb F}}
\newcommand{\NN}{{\mathbb N}}
\newcommand{\ZZ}{{\mathbb Z}}
\DeclareMathOperator{\GL}{GL}
\DeclareMathOperator{\Hom}{Hom}
\DeclareMathOperator{\End}{End}
\numberwithin{equation}{section}
\newtheorem{lem}[equation]{Lemma}
\newtheorem{prop}[equation]{Proposition}
\theoremstyle{definition}
\newtheorem{exmp}[equation]{Example}
\newtheorem*{alg}{Algorithm}
\theoremstyle{remark}
\newtheorem{rem}[equation]{Remark}
\theoremstyle{remark}
\newtheorem{step}{Step}
\subjclass[2010]{20G40, 20C15}
\keywords{Finite group; counting homomorphisms; representation variety}
\title[The Size of $\Hom(A,\GL_n(q))$.]
{Asymptotic Bounds for the Size of $\Hom(A,\GL_n(q))$.}
\author[M.\  Bate]{Michael Bate}
\address%[M.\  Bate]
{Department of Mathematics,
University of York,
York YO10 5DD.
United Kingdom}
\email{michael.bate@york.ac.uk}
\author[A. Gullon]{Alec Gullon}
\address%[A. Gullon]
{Department of Mathematics,
Lancaster University,
Lancaster,
LA1 4YF,
United Kingdom}
\email{a.gullon@lancaster.ac.uk}
\begin{document}

\begin{abstract}
Fix an arbitrary finite group $A$ of order $a$, and let $X(n,q)$ denote the set of homomorphisms
from $A$ to the finite general linear group $\GL_n(q)$.
The size of $X(n,q)$ is a polynomial in $q$.
In this note it is shown that generically this polynomial has degree $n^2(1-a^{-1}) - \epsilon_r$ and
leading coefficient $m_r$, where $\epsilon_r$ and $m_r$ are constants depending only on
$r := n \mod a$.
We also present an algorithm for explicitly determining these constants.
\end{abstract}

\maketitle

\section{Introduction}

Let $A$ be a finite group of order $a$ and let $X(n,q) = \Hom(A,\GL_n(q))$ denote the set of all homomorphisms
from $A$ to the general linear group of $n\times n$ invertible matrices with entries
in the finite field $\FF_q$.
Suppose that $\FF_q$ is a splitting field for $A$.
In \cite{ls}, Liebeck and Shalev provide upper and lower bounds for the size
of this set, which is a polynomial in $q$ \cite[Prop.\ 4.1]{chig}; see also \cite{bate}.
The bound presented in \cite[Theorem]{ls} has the following form:
$$
cq^{(n^2-r^2)(1-a^{-1})} \leq |X(n,q)| \leq dq^{n^2(1-a^{-1})},
$$
where $c$ is an absolute constant, $d$ is a constant depending only on $a$, and $r$ is the value of $n$ modulo $a$. Note that, as is pointed out in \cite{ls} and \cite{bate}, there is an absolute constant $\beta>0$ such that $\beta q^{n^2} \leq |\GL_n(q)| \leq q^{n^2}$ for all $n$ and $q$, so these bounds can be rewritten in terms of the order of $\GL_n(q)$.
The aim of this note is to show that there exists $N$ such that for all $n \geq N$
the leading term of the polynomial $|X(n,q)|$ has the form
$$
m_rq^{n^2(1-a^{-1}) - \epsilon_r},
$$
where (given a fixed group $A$) $m_r$ and $\epsilon_r$ are constants only depending on $r$,
and $N= a(a-1)$ will definitely suffice.
In particular, this leading coefficient and degree are independent of $q$.
We also present an algorithm for explicitly determining the values of $m_r$, $\epsilon_r$ and $N$ for any choice of
$A$.
The input needed for the algorithm is the degrees of the irreducible representations
for $A$ over a splitting field for $A$.

The paper is laid out as follows.
We begin in Section \ref{sec:prelims} by setting up some basic notation and recalling some of the
analysis from \cite{ls} and \cite{bate}, before moving on to the main results
in Section \ref{sec:theory}.
After giving some examples to illustrate various points of the paper in Section \ref{sec:egs},
in the final section of the paper we indicate how to relax some of the assumptions in force for the rest of the
paper, and also make some further remarks.

\section{Preliminaries}\label{sec:prelims}

Throughout, $A$ denotes a finite group with $a$ elements.
We use $q$ to denote the order of a finite field $\mathbb{F}_q$, so $q=p^d$ for some prime $p$ and positive integer $d$.
Our standing assumption on $q$ for most of this note (except in Section \ref{sec:extensions}) is that $\FF_q$ is a splitting field for $A$,
i.e., the characteristic $p$ of the field $\FF_q$ does not divide $a$ and all irreducible $\FF_qA$-modules
are absolutely irreducible.
By Schur's Lemma, given a simple $\FF_qA$-module $M$, we have $\End_{\FF_qA-{\rm mod}}(M) \simeq \FF_q$.

For an $n$-dimensional $\FF_q$-vector space $V$, we have the finite general linear group $\GL(V)$ which we freely identify
with $\GL_n(q)$, the group of invertible $n\times n$ matrices with entries in $\FF_q$, when it is convenient to do so.
We let $X(n,q)=\Hom(A,\GL_n(q))$ denote the set of homomorphisms from $A$ to $\GL_n(q)$ for each choice of $n$ and $q$,
and note that $\GL_n(q)$ acts on $X(n,q)$ by conjugation: given $\rho:A \to \GL_n(q)$ and $g \in \GL_n(q)$,
set $(g\cdot\rho)(a) = g\rho(a)g\inverse$ for all $a \in A$.
This breaks the set $X(n,q)$ up into $\GL_n(q)$-orbits, and one key part of the analysis in
\cite{ls} and \cite{bate} is to bound the size of each of these orbits.
This involves some basic representation theory, which we now recap.

Let $(M_1,\ldots,M_s)$ be a complete ordered tuple of pairwise non-isomorphic irreducible
(hence absolutely irreducible by our assumptions on the field)
$\FF_qA$-modules, and let $d_i = \dim(M_i)$ for each $i$.
Choose the labelling so that $M_1$ is the trivial module.
The degrees $d_i$ are the same for any splitting field for $A$, and $a = \sum_{i=1}^s d_i^2$.
Given any $\FF_qA$-module $V$, we have an isomorphism
$$
V \simeq n_1M_1 \oplus \cdots \oplus n_sM_s,
$$
where $n_iM_i$ denotes the direct sum of the module $M_i$ with itself $n_i$ times (we allow $n_i=0$ here).
For a given $\FF_qA$-module $V$, we therefore have an ordered $s$-tuple $(n_1,\ldots,n_s)$ of non-negative integers and
two $\FF_qA$-modules are isomorphic if and only if they have the same ordered $s$-tuple attached.
Moreover, if we restrict attention to $n$-dimensional modules for some fixed $n$, then the relevant $s$-tuples
$(n_1,\ldots,n_s)$
for which $\sum_{i=1}^s n_id_i = n$ also parametrise the $\GL_n(q)$-orbits in $X(n,q)$.
It follows from the analysis in \cite{ls} and \cite{bate} that, given a tuple $(n_1,\ldots,n_s)$, the
stabilizer associated to the corresponding orbit in $X(n,q)$ is isomorphic to a product $\prod_{i=1}^s \GL_{n_i}(q)$,
which allows us to write down the size of the orbit by the Orbit-Stabilizer Theorem.
The key to the approach presented in this note is to give a better estimate of the largest possible size for such an orbit,
and to show that such a size is attained, which improves on the upper and lower bounds presented in \cite{ls}.

\section{Results}\label{sec:theory}

Keep the notation from the previous section, and remember our standing assumption that $\FF_q$ is a splitting field for $A$.
Before stating and proving the main technical results needed for our algorithm, we introduce some more terminology.
Let $n \in \NN$.
We say that an ordered tuple $(n_1,\ldots,n_s)$ of integers (not necessarily non-negative) is
\emph{admissible for $n$} if $\sum_{i=1}^s n_id_i = n$; if the context is clear, then we simply say the tuple is admissible.
We call such an admissible tuple \emph{eligible} if, in addition, $n_i\geq 0$ for all $i$.
Finally, we call a tuple $(n_1,\ldots,n_s)$ which is admissible for $n \in \NN$ a \emph{minimal tuple for $n$} if
$\sum_{i=1}^s n_i^2$ is minimal amongst all admissible tuples for $n$.

\begin{lem}\label{lem:minimaltuple=>maximalorbit}
Fix $n \in \NN$. Then:
\begin{itemize}
\item[(i)] the parametrisation of the orbits in $X(n,q)$ by eligible tuples is independent of $q$;
\item[(ii)] for each eligible tuple $\mathbf{t} = (n_1,\ldots,n_s)$, there is a polynomial $f_\mathbf{t}(x) \in \ZZ[x]$ such that
the size of the corresponding orbit in $X(n,q)$ is $f_{\mathbf{t}}(q)$.
\item[(iii)] the polynomials of maximal degree from (ii) are precisely those corresponding to minimal tuples.
\end{itemize}
\end{lem}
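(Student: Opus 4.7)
The plan is to address each of the three parts in turn, using basic representation theory together with the orbit--stabiliser theorem.

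For (i), recall from Section \ref{sec:prelims} that the $\GL_n(q)$-orbits on $X(n,q)$ are in bijection with isomorphism classes of $n$-dimensional $\FF_q A$-modules, which in turn correspond to eligible tuples $(n_1,\ldots,n_s)$ via the isotypic decomposition $V \cong \bigoplus_i n_i M_i$. Since both the number $s$ of irreducibles and the degrees $d_i$ depend only on $A$ (and not on the choice of splitting field), the set of eligible tuples for a given $n$ is manifestly independent of $q$, and hence so is the orbit parametrisation.

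For (ii), by the orbit--stabiliser theorem and the identification of stabilisers recalled in Section \ref{sec:prelims}, the orbit indexed by $\mathbf{t}=(n_1,\ldots,n_s)$ has size
\[
\frac{|\GL_n(q)|}{\prod_{i=1}^s |\GL_{n_i}(q)|}.
\]
The classical factorisation $|\GL_m(q)| = q^{\binom{m}{2}}\prod_{k=1}^m (q^k-1)$ expresses this as a rational function of $q$ which takes a positive integer value at every prime power. A standard argument (a rational function that is bounded by a polynomial and integer-valued on an infinite set must already be polynomial) upgrades this to an element of $\QQ[x]$; integrality of the coefficients can then be obtained either by a direct combinatorial count of the orbit as the set of $\FF_q A$-module structures of type $\mathbf{t}$ on $\FF_q^n$, or by interpreting the orbit as the $\FF_q$-points of a homogeneous space defined over $\ZZ$.

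For (iii), using $\deg |\GL_m(q)| = m^2$ one reads off
\[
\deg f_{\mathbf{t}}(x) = n^2 - \sum_{i=1}^s n_i^2,
\]
so maximising the degree over eligible tuples is equivalent to minimising $\sum n_i^2$ over them. The remaining content is to reconcile this with the definition of a minimal tuple, which allows arbitrary integer entries: one needs to verify that the infimum of $\sum n_i^2$ over admissible tuples is already attained by an eligible one, so that every minimal tuple does in fact index a $\GL_n(q)$-orbit. This should fall out of the ability to shift ``weight'' into the coordinate $n_1$, using the presence of the trivial module ($d_1 = 1$).

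The main obstacle I anticipate is the $\ZZ[x]$ integrality assertion in (ii): rationality and polynomiality in $\QQ[x]$ follow from general principles, but bootstrapping to integer coefficients takes a little care and probably motivates the direct combinatorial description of the orbit sizes. The asymmetry in (iii) between admissible and eligible tuples is a smaller but genuine subtlety that must be handled carefully when translating the degree formula back into the language of minimal tuples.
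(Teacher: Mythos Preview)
Your treatment of (i) and (ii) matches the paper's: it too invokes independence of the irreducible degrees from the choice of splitting field, writes the orbit size as $|\GL_n(q)|/\prod_i|\GL_{n_i}(q)|$, and simply asserts this is a polynomial in $q$ with leading term $q^{\,n^2-\sum_i n_i^2}$. Your extra care about $\ZZ[x]$-integrality is reasonable but not something the paper dwells on.

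For (iii), however, the step you single out would actually fail. You plan to show that the minimum of $\sum n_i^2$ over admissible tuples is already realised by an eligible tuple, via ``shifting weight into $n_1$''. This is false in general: take $A=S_5$ with irreducible degrees $1,1,4,4,5,5,6$ and $n=3$. The admissible tuple $(0,-1,1,0,0,0,0)$ has $\sum n_i^2=2$, whereas every eligible tuple for $3$ must have $n_1+n_2=3$ and all other entries zero, forcing $\sum n_i^2\ge 5$; this is precisely the example flagged later in Section~\ref{sec:egs}. The paper does not attempt your reconciliation here: its proof of (iii) only observes that the maximal-degree orbit polynomials correspond to eligible tuples minimising $\sum n_i^2$ among \emph{eligible} tuples (what it calls ``minimal eligible tuples''), and the identification of these with minimal tuples in the admissible sense is deferred to Lemma~\ref{lem:OKafterN}, which is exactly the statement that all minimal tuples become eligible once $n\ge N$. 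So drop the weight-shifting argument; it cannot be made to work for small $n$, and the paper's later results are structured around that fact.
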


\begin{proof}
Part (i) follows from the fact that the dimensions and number of isomorphism classes of absolutely irreducible $\FF_qA$-modules
over a splitting field are independent of that field.
Then (ii) follows because the shape of the stabilizer of a given orbit, as given at the end of the previous section, is independent of $q$.
Specifically, given any eligible tuple $(n_1,\ldots,n_s)$, the size of the associated orbit in $X(n,q)$ is $|\GL_n(q)|/\prod_{i=1}^s |\GL_{n_i}(q)|$,
which is a polynomial in $q$ \cite[Prop.\ 4.1]{chig} with leading term $q^{n^2-\sum_{i=1}^sn_i^2}$.
Therefore, the largest degree amongst all these polynomials is attained by those polynomials corresponding to tuples for which
$\sum_{i=1}^s n_i^2$ is minimal.
These are precisely the polynomials corresponding to minimal eligible tuples, which proves (iii).
\end{proof}

\begin{lem}\label{lem:finitenumberoftuples}
Given $n \in \NN$, there are finitely many minimal tuples for $n$.
\end{lem}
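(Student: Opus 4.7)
The plan is to give a direct bounding argument. Note first that the condition $\sum_{i=1}^s n_id_i = n$ cuts out a coset of a rank-$(s-1)$ sublattice of $\ZZ^s$, so the set of admissible tuples is infinite as soon as $s \geq 2$. Nevertheless, the quantity $\sum_{i=1}^s n_i^2$ takes values in $\NN \cup \{0\}$ and is always non-negative, so the set of these values has a minimum $m$, and (since $n$ itself gives an admissible tuple, e.g.\ $(n,0,\ldots,0)$ using $d_1=1$) this minimum is finite and attained.

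Now I would simply observe that if $(n_1,\ldots,n_s)$ is a minimal tuple for $n$, then $n_i^2 \leq \sum_j n_j^2 = m$ for every $i$, whence $|n_i| \leq \sqrt{m}$. Thus every minimal tuple lies in the finite set $\{-\lfloor\sqrt{m}\rfloor,\ldots,\lfloor\sqrt{m}\rfloor\}^s$, which has cardinality at most $(2\lfloor\sqrt{m}\rfloor+1)^s$. In particular there are only finitely many minimal tuples.

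There is essentially no obstacle here: once one recognises that minimality automatically produces a coordinatewise bound, the finiteness is immediate. The only point worth flagging is that the statement does \emph{not} assert that there are finitely many admissible tuples (there are not), nor that minimal tuples are eligible in the sense of Lemma \ref{lem:minimaltuple=>maximalorbit} (they need not be, and indeed subsequent sections will presumably exploit the gap between minimal admissible tuples and minimal eligible tuples to identify the regime $n \geq N$ in which the two coincide).
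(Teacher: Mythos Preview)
Your proof is correct and follows essentially the same approach as the paper: use the admissible tuple $(n,0,\ldots,0)$ (available since $d_1=1$) to see that the minimum sum of squares is finite, and then bound each coordinate of a minimal tuple in absolute value by the square root of that sum. The only cosmetic difference is that the paper records the explicit bound $|n_i|\leq n$ (i.e.\ $m\leq n^2$) rather than your $|n_i|\leq\sqrt{m}$, and this explicit form is later reused to produce the concrete value $N=a(a-1)$.
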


\begin{proof}
Since $d_1=1$, the tuple $(n,0,0,\ldots,0)$ is admissible for $n$, and hence for a minimal tuple $(n_1,\ldots,n_s)$ we have
$\sum_{i=1}^sn_i^2 \leq n^2$.
In particular, for each $i$ we have $-n\leq n_i \leq n$.
This gives rise to finitely many tuples, and all the minimal tuples lie amongst these.
\end{proof}

The above lemma shows that for each $n$ we have finitely many minimal tuples to worry about.
In fact, we can do much better than that, as the following results show.

\begin{lem}\label{lem:correspondence}
Given $n \in \NN$, write $n = ka+r$ where $k \in \NN\cup\{0\}$ and $0\leq r < a$.
\begin{itemize}
\item[(i)]  Suppose $(r_1,\ldots,r_s)$ is a minimal tuple for $r$.
Then $(kd_1+r_1,\ldots,kd_s+r_s)$ is a minimal tuple for $n$.
\item[(ii)] Suppose $(n_1,\ldots,n_s)$ is a minimal tuple for $n$.
Then $(n_1-kd_1,\ldots,n_s-kd_s)$ is a minimal tuple for $r$.
\end{itemize}
Hence minimal tuples for $n$ are in 1-1 correspondence with minimal tuples for $r$.
\end{lem}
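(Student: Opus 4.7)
The plan is to set up a bijection between admissible tuples for $n$ and admissible tuples for $r$ given by the shift $(n_1, \ldots, n_s) \mapsto (n_1 - kd_1, \ldots, n_s - kd_s)$, and then to show that this shift changes the sum of squares $\sum n_i^2$ by a constant that depends only on $n$, $k$ and $a$. Once this is established, minimality is automatically preserved under the bijection in both directions, which yields (i), (ii) and the final 1-1 correspondence simultaneously.

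First I would verify that the shift map is well-defined and bijective: if $\sum n_i d_i = n$, then $\sum (n_i - kd_i) d_i = n - k \sum d_i^2 = n - ka = r$, so the image is admissible for $r$; the inverse is the opposite shift, and admissibility for $r$ translates back to admissibility for $n$ by the same calculation. Note that at this stage there is no non-negativity constraint to worry about, since minimal tuples are defined among all admissible integer tuples.

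The key computation is then
\[
\sum_{i=1}^s n_i^2 - \sum_{i=1}^s (n_i - kd_i)^2
= 2k \sum_{i=1}^s n_i d_i - k^2 \sum_{i=1}^s d_i^2
= 2kn - k^2 a,
\]
which depends only on $n$, $k$, $a$ and not on the particular tuple. Consequently, a tuple $(n_1, \ldots, n_s)$ minimises $\sum n_i^2$ among admissible tuples for $n$ if and only if its image $(n_1 - kd_1, \ldots, n_s - kd_s)$ minimises the sum of squares among admissible tuples for $r$. Applying this in one direction gives (ii); applying it to the inverse shift gives (i); and combining the two shows that the shift restricts to a bijection between the (finite, by Lemma \ref{lem:finitenumberoftuples}) sets of minimal tuples for $n$ and for $r$.

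I do not foresee a real obstacle: the entire argument is powered by the identity $\sum d_i^2 = a$ together with the fact that shifting all coordinates by a fixed vector alters $\sum n_i^2$ by a cross-term linear in the constraint plus a constant, so the constraint $\sum n_i d_i = n$ converts that cross-term into something tuple-independent. The only care required is to keep the sign conventions straight when writing out (i) versus (ii) and to emphasise that minimal tuples are allowed to have negative entries, so that the shift need not preserve eligibility (eligibility plays no role here and is only relevant later when counting orbits).
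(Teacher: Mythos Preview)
Your proof is correct and rests on the same core idea as the paper: the shift $(n_1,\ldots,n_s)\mapsto(n_1-kd_1,\ldots,n_s-kd_s)$ is a bijection on admissible tuples, and the identity $\sum d_i^2=a$ controls how the sum of squares changes under this shift. The execution, however, is slightly different and in fact cleaner. The paper fixes a minimal tuple $(r_1,\ldots,r_s)$ for $r$ and a minimal tuple $(n_1,\ldots,n_s)$ for $n$ and runs an inequality--then--equality argument: it uses minimality of the $r$-tuple to bound $\sum(kd_i+r_i)^2\le\sum n_i^2$, then invokes minimality of the $n$-tuple to force equality, and finally unwinds to get (ii). You bypass this by computing directly that $\sum n_i^2-\sum(n_i-kd_i)^2=2kn-k^2a$ is a constant independent of the tuple, so minimality is preserved in both directions at once. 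Your packaging makes the bijection and the simultaneous proof of (i) and (ii) more transparent; the paper's version has the minor advantage of not requiring the reader to spot that the cross-term collapses, but this is a matter of taste rather than substance.
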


\begin{proof}
First note that if $(r_1,\ldots,r_s)$ is an admissible tuple for $r$
then $\sum_{i=1}^s r_id_i = r$, so
$$
\sum_{i=1}^s (kd_i + r_i)d_i = k\sum_{i=1}^s d_i^2 + \sum_{i=1}^sr_id_i = ka+r = n,
$$
and hence $(kd_1+r_1,\ldots,kd_s+r_s)$ is admissible for $n$.
Conversely, if $(n_1,\ldots,n_r)$ is admissible for $n$, then
$$
\sum_{i=1}^s (n_i-kd_i)d_i = \sum_{i=1}^sn_id_i - k\sum_{i=1}^s d_i^2 =n- ka=r,
$$
so $(n_1-kd_1,\ldots,n_s-kd_s)$ is admissible for $r$.

Now suppose $(r_1,\ldots,r_s)$ is minimal for $r$ and $(n_1,\ldots,n_s)$ is minimal for $n$.
Since $(r_1,\ldots,r_s)$ is minimal for $r$ and $(n_1-kd_1,\ldots,n_s-kd_s)$ is admissible for $r$, we have
\begin{align*}
\sum_{i=1}^s(kd_i+r_i)^2 &=k^2\sum_{i=1}^s d_i^2 + 2k\sum_{i=1}^s r_id_i +\sum_{i=1}^s r_i^2\\
                   &= k^2a+2kr+ \sum_{i=1}^sr_i^2\\
                   &\leq k^2a+2kr+\sum_{i=1}^s (n_i-kd_i)^2\\
                   &= k^2a+2kr+\sum_{i=1}^s n_i^2 - 2k\sum_{i=1}^s n_id_i + k^2\sum_{i=1}^sd_i^2\\
                   &= \sum_{i=1}^s n_i^2 + 2k^2a+2kr- 2kn \\
                   &= \sum_{i=1}^s n_i^2.
\end{align*}
But $(n_1,\ldots,n_r)$ is minimal for $n$, so we must actually have equality here and hence $(kd_1+r_1,\ldots,kd_s+r_s)$
is also minimal for $n$.
Using this equality, we now also have
\begin{align*}
\sum_{i=1}^s(n_i-kd_i)^2 &=\sum_{i=1}^s n_i^2 - 2k\sum_{i=1}^s n_id_i +k^2\sum_{i=1}^s d_i^2\\
                   &= \sum_{i=1}^s (kd_i+r_i)^2 - 2kn+k^2a\\
                   &= 2k^2a +2kr + \sum_{i=1}^sr_i^2 -2k(ka+r) \\
                   &= \sum_{i=1}^s r_i^2,
\end{align*}
so $(n_1-kd_1,\ldots,n_s-kd_s)$ is a minimal tuple for $r$.
\end{proof}

\begin{rem}
Lemma \ref{lem:correspondence} is really at the heart of this note.
It shows that, despite the fact that the whole set $X(n,q)$ gets more and more complicated as $n$ grows, we
can still exert some control over the orbits which are largest in the sense of Lemma \ref{lem:minimaltuple=>maximalorbit}(iii).
One cannot hope for this to be true for smaller orbits, because as $n$ grows, so does the number of eligible tuples
and hence the total number of orbits.
\end{rem}

\begin{lem}\label{lem:OKafterN}
There exists $N \in \NN$ such that for all $n\geq N$, all minimal tuples for $n$ are eligible and the number
of minimal tuples only depends on the value of $n$ modulo $a$.
\end{lem}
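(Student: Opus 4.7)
The plan is to use Lemma \ref{lem:correspondence} as the key engine. That lemma already establishes a bijection between minimal tuples for $n$ and minimal tuples for $r$, where $n = ka+r$ with $0 \leq r < a$. Since $r$ depends only on $n \bmod a$, the cardinality statement (the number of minimal tuples depends only on $n \bmod a$) is a direct consequence of the bijection and is in fact true for all $n$, with no lower bound needed.

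The content of the lemma therefore lies entirely in the eligibility statement, and the task reduces to producing $N$ large enough that, for each $n \geq N$, the tuple $(kd_1+r_1,\ldots,kd_s+r_s)$ produced by Lemma \ref{lem:correspondence}(i) has all entries non-negative. The key observation is that the coordinates $r_i$ of a minimal tuple for $r$ cannot be too negative. Specifically, since $d_1 = 1$, the tuple $(r,0,\ldots,0)$ is admissible for $r$, and hence for any minimal tuple $(r_1,\ldots,r_s)$ for $r$ we have $\sum_i r_i^2 \leq r^2 \leq (a-1)^2$. This forces $|r_i| \leq a-1$ for every $i$.

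Given this bound, eligibility of $(kd_1+r_1,\ldots,kd_s+r_s)$ amounts to $kd_i + r_i \geq 0$ for each $i$, and since $d_i \geq 1$ and $r_i \geq -(a-1)$, it suffices to have $k \geq a-1$. Translating back via $n = ka + r$ with $r \geq 0$, this is guaranteed as soon as $n \geq (a-1)a = a(a-1)$, so we may take $N = a(a-1)$, matching the explicit value promised in the introduction.

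No step here is genuinely hard; the only subtlety is locating the right a priori bound on the entries of a minimal tuple for $r$, and this is handled cleanly by comparing against the trivial admissible tuple $(r,0,\ldots,0)$. All of the bijective content of the argument is already packaged inside Lemma \ref{lem:correspondence}, so the proof is essentially a one-paragraph size estimate applied to a finite list of residues $r \in \{0,1,\ldots,a-1\}$.
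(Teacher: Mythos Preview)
Your proof is correct and follows essentially the same route as the paper: invoke Lemma~\ref{lem:correspondence} for the bijection (hence the cardinality statement), then bound the entries of minimal tuples for $r$ via comparison with $(r,0,\ldots,0)$ to force eligibility once $k$ is large. The only cosmetic difference is that you compute the explicit bound $N=a(a-1)$ inside the proof, whereas the paper's proof merely asserts that a suitable $N$ exists and defers the explicit value to a remark immediately afterwards.
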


\begin{proof}
For any $n \in \NN$, the number of minimal tuples for $n$ is the same as the number of minimal tuples for $r$, where $n=ka+r$ with $k \in \NN\cup\{0\}$ and $0\leq r<a$,
by Lemma \ref{lem:correspondence}, so this number only depends on the value of $n$ modulo $a$.
Moreover, the minimal tuples for $n$ all have the form $(kd_1+r_1,\ldots,kd_s+r_s)$ where $(r_1,\ldots,r_s)$ is a minimal
tuple for $r$, again by Lemma \ref{lem:correspondence}.
There are finitely many values $r_i$ as $r$ runs over all integers between $0$ and $a-1$ by the argument in the proof of Lemma \ref{lem:finitenumberoftuples},
and we just need to choose $N$ large enough so that for all $n \geq N$, all possible values $kd_i + r_i \geq 0$, which can clearly be done.
After this point, all the minimal tuples are also eligible.
\end{proof}

\begin{rem}
By the argument in the proof of Lemma \ref{lem:finitenumberoftuples}, if $(r_1,\ldots,r_s)$ is a minimal tuple for $0\leq r <a$,
the minimal possible value for any $r_i$ is $-r$.
Since each $d_i\geq 1$ this means that $kd_i+r_i\geq 0$ as long as $k>a-1$ for any choice of $0\leq r <a$,
so we could choose $N = a(a-1)$ in Lemma \ref{lem:OKafterN} if we wanted a concrete bound.
However, in practice, as we shall see, the best value for $N$ is often much less than this.
\end{rem}

\begin{rem}
When $n = ka$ is a multiple of $a$, the tuple $(kd_1,\ldots,kd_r)$ is the unique minimal eligible tuple for $n$.
This is because this tuple gives a global minimum for the value $\sum_{i=1}^s x_i^2$ amongst all tuples of real numbers $(x_1,\ldots,x_s)$ satisfying
the constraint $\sum_{i=1}^s x_id_i = n$, as can be verified using some basic calculus.
\end{rem}

\begin{prop}\label{prop:mainstatement}
Suppose $0\leq r <a$.
Let $m_r$ be the number of minimal tuples for $r$, and let $(r_1,\ldots,r_s)$ be one of the minimal tuples for $r$.
Let $\epsilon_r = \sum_{i=1}^sr_i^2 - r^2a^{-1}$.
Given $n \in \NN$ with $n \geq N$, where $N$ is as in Lemma \ref{lem:OKafterN}, write $n = ka+r$, where $k \in \ZZ$ and $0\leq r <a$.
Then
\begin{itemize}
\item[(i)] $\epsilon_r \geq 0$ (with equality if and only if $r=0$);
\item[(ii)] there exists a polynomial $f_n(x) \in \ZZ[x]$, independent of $q$, whose leading term is $m_rx^{n^2(1-a^{-1})-\epsilon_r}$
such that $|X(n,q)| = f_n(q)$.
\end{itemize}
\end{prop}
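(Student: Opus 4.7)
The plan is to derive both parts as a bookkeeping consequence of Lemmas \ref{lem:minimaltuple=>maximalorbit}, \ref{lem:correspondence} and \ref{lem:OKafterN}, together with Cauchy--Schwarz for the inequality in (i).

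For (i), I would apply Cauchy--Schwarz to the vectors $(r_1,\ldots,r_s)$ and $(d_1,\ldots,d_s)$:
\[
r^2 = \Big(\sum_{i=1}^s r_i d_i\Big)^2 \leq \Big(\sum_{i=1}^s r_i^2\Big)\Big(\sum_{i=1}^s d_i^2\Big) = a\sum_{i=1}^s r_i^2,
\]
which rearranges to $\epsilon_r \geq 0$. The equality case of Cauchy--Schwarz forces $(r_i) = c(d_i)$ for some $c \in \RR$; since $d_1 = 1$ this forces $c = r_1 \in \ZZ$, and the constraint $\sum r_i d_i = r$ then gives $c = r/a$. With $0 \leq r < a$, this is integral only when $r = 0$, in which case the zero tuple is the (unique) minimal tuple and $\epsilon_0 = 0$.

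For (ii), I would combine the earlier lemmas as follows. First, $|X(n,q)|$ is the sum of the orbit polynomials $f_{\mathbf t}(q)$ over eligible tuples $\mathbf t$, and is therefore a polynomial $f_n(x) \in \ZZ[x]$ independent of $q$ by Lemma \ref{lem:minimaltuple=>maximalorbit}(i)--(ii). Because $n \geq N$, Lemma \ref{lem:OKafterN} guarantees that every minimal tuple for $n$ is eligible, and by Lemma \ref{lem:correspondence} there are exactly $m_r$ such tuples, each of the form $(kd_i + r_i)_i$. Lemma \ref{lem:minimaltuple=>maximalorbit}(iii) then tells me that the top-degree contributions to $f_n$ come precisely from these $m_r$ tuples; each associated orbit polynomial is monic (since $|\GL_m(q)| = \prod_{j=0}^{m-1}(q^m - q^j)$ is monic in $q$, so its ratio with $\prod_i |\GL_{kd_i+r_i}(q)|$ is too), with degree
\[
n^2 - \sum_{i=1}^s (kd_i + r_i)^2 = n^2 - k^2 a - 2kr - \sum_{i=1}^s r_i^2.
\]
Substituting $n^2 = k^2 a^2 + 2kar + r^2$ rewrites the right-hand side as $n^2(1 - a^{-1}) - \epsilon_r$. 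The $m_r$ monic top contributions all have coefficient $+1$, so no cancellation can occur and the leading term of $f_n$ is $m_r q^{n^2(1-a^{-1}) - \epsilon_r}$, as required.

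The main obstacle, such as it is, is making sure that the $m_r$ top contributions genuinely remain on top: this is fine because their leading coefficients are positive, and all other eligible tuples give strictly smaller-degree polynomials by Lemma \ref{lem:minimaltuple=>maximalorbit}(iii). So the proposition reduces to algebra plus quotation of the earlier lemmas.
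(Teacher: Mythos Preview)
Your proof is correct and matches the paper's argument essentially step for step. The only cosmetic difference is in (i): the paper phrases the inequality as the constrained minimum of $\sum x_i^2$ on the hyperplane $\sum x_i d_i = r$ being attained at $(a^{-1}rd_1,\ldots,a^{-1}rd_s)$ (``basic calculus''), whereas you use Cauchy--Schwarz directly; these are the same inequality, and the equality analysis via $d_1=1$ is identical.
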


\begin{proof}
(i). The global minimum value for $\sum_{i=1}^s x_i^2$
amongst all real $s$-tuples $(x_1,\ldots,x_s)$ satisfying $\sum_{i=1}^s x_id_i = r$ is given by the tuple $(a^{-1}rd_1,\ldots,a^{-1}rd_s)$,
and hence $\sum_{i=1}^s r_i^2 \geq \sum_{i=1}^s a^{-2}r^2d_i^2 = a^{-1}r^2$, as required.
It is clear that if $r=0$, then $\epsilon_r=0$.
For the converse note that we get $\epsilon_r=0$ if and only if the global minimum tuple is an integer tuple,
that is if and only if $a^{-1}rd_i \in \ZZ$ for all $i$.
But since $d_1 = 1$ (the degree of the trivial irreducible representation), this means that $a^{-1}r \in \ZZ$.
Since $r<a$, this is only possible if $r=0$.

(ii). By the definition of $N$ from Lemma \ref{lem:OKafterN}, all minimal tuples for $n$ are eligible and there are precisely $m_r$ of them.
Moreover, each minimal tuple for $n$ has the form $(kd_1+r_1',\ldots,kd_s+r_s')$, where $(r_1',\ldots,r_s')$ is a minimal tuple for $r$.
Now note that for any minimal tuple $(r_1',\ldots,r_s')$ for $r$ we have $\sum_{i=1}^s (r_i')^2 = \sum_{i=1}^s r_i^2$,
where $(r_1,\ldots,r_s)$ is the fixed minimal tuple picked in the statement of the result.
Then
\begin{align*}
\sum_{i=1}^s (kd_i+r_i')^2 &= k^2\sum_{i=1}^s d_i^2 + 2k\sum_{i=1}^sr_i'd_i + \sum_{i=1}^s (r_i')^2\\
                           &= k^2a + 2kr + \sum_{i=1}^s r_i^2 \\
                           &= a^{-1}(k^2a^2 + 2akr + r^2) -r^2a^{-1} + \sum_{i=1}^s r_i^2 \\
                           &= a^{-1}n^2 + \epsilon_r.
\end{align*}
By Lemma \ref{lem:minimaltuple=>maximalorbit}(iii), the minimal tuples for $n$ give rise to the orbits whose orders are polynomials of maximal degree
amongst the orders of all orbits in $X(n,q)$, and the order of each of these orbits is a polynomial in $q$ with leading term $q^{n^2- a^{-1}n^2 - \epsilon_r}$.
Since $X(n,q)$ is the disjoint union of all of the orbits it contains, the order of $X(n,q)$ is a polynomial in $q$ with leading term $m_rq^{n^2(1-a^{-1})-\epsilon_r}$.
None of the arguments used to derive this result rely on the actual value of $q$, only that $\FF_q$ is a splitting field for $A$.
Since the degrees $d_i$ are all the same for any splitting field, we get the result.
\end{proof}

We summarise the results obtained in the form of an algorithm:

\begin{alg} The following steps will allow one to find the numbers $m_r$, $\epsilon_r$ and $N$
from Proposition \ref{prop:mainstatement}, and hence calculate the highest degree term of the polynomial
$|X(n,q)|$ for any $n \geq N$.

\begin{step}
For each $0  \leq r  < a$ find all minimal tuples of integers for $r$; that is, find all tuples $(r_1,\ldots,r_s)$
satisfying $\sum_{i=1}^sr_id_i = r$ and minimising the value of $\sum_{i=1}^s r_i^2$.
For each $r$, record the number $m_r$ of minimal tuples found, and the number $\epsilon_r := \sum_{i=1}^s r_i^2 - r^2a^{-1}$,
where $(r_1,\ldots,r_s)$ is one of the minimal tuples.
\end{step}

\begin{step}
Find the smallest $b \in \NN \cup \{0\}$ such that $bd_i + r_i \geq 0$ for all $r_i$ from step $1$. Then set $N = ba$.
\end{step}
\end{alg}

\section{Examples}\label{sec:egs}

We now present some examples of our algorithm and its results when applied to some groups which are
relatively easy to handle.
For a given minimal tuple $(r_1,\ldots,r_s)$, we denote $\sum_{i=1}^s r_i^2$ by $S_r$.

\subsection{Abelian groups}
If $A$ is abelian and $\FF_q$ is a splitting field for $A$, then there are $a$ distinct classes of irreducible
representations of $A$ and they are all one-dimensional.
Therefore, for $0\leq r < a$, a minimal tuple is just found by filling $r$ spaces with a $1$
and $a-r$ with a zero.
This means that all minimal tuples are eligible, so $N=0$, $m_r$ is the binomial coefficient
$\binom{a}{r}$, and $S_r = r$.
Therefore $\epsilon_r = r-r^2a^{-1}$ and the leading term of the polynomial $|X(n,q)|$ is
$\binom{a}{r}q^{n^2(1-a^{-1}) - r +r^2a^{-1}}$.

\subsection{Dihedral groups $D_m$}
Let $A = D_m$ be the dihedral group of order $a=2m$.
For $\FF_q$ to be a splitting field for $A$, it is enough that $\FF_q$ contains all elements of the form $\zeta + \zeta^{-1}$, where $\zeta$ is a root of the $m^{\rm th}$ cyclotomic polynomial.
Over a splitting field, if $m$ is odd, then $A$ has two irreducible representations of degree $1$ and $\frac{m-1}{2}$ of degree $2$,
and if $m$ is even we get four irreducible representations of degree $1$ and $\frac{m-2}{2}$ of degree $2$.
Hence, we split into two cases:

\smallskip
\noindent
\textbf{$m = 2l+1$ is odd.}
In this case for any even $r = 2k$ with $0\leq r <2n$, we choose $k$ of the $m$ representations of degree $2$ to come up with a minimal tuple;
there are $\binom{m}{k}$ ways to do this, so that is the value of $m_r$, and $S_r = k = \frac{r}{2}$.
For an odd $r = 2k+1$, we again choose $k$ of the degree $2$ representations, and then one of degree $1$;
there are therefore $m_r=2\binom{m}{k}$ ways to do this, each one with $S_r = k+1 = \frac{r+1}{2}$.
Hence we have that the leading term of the polynomial $|X(n,q)|$ is
$\binom{m}{k}q^{n^2(1-a^{-1}) - \frac{r}{2} +r^2a^{-1}}$ if $r=2k$ is even
and $2\binom{m}{k}q^{n^2(1-a^{-1}) - \frac{r+1}{2} +r^2a^{-1}}$ if $r=2k+1$ is odd.

\smallskip
\noindent
\textbf{$m = 2l$ is even.}
In this case for any even $r = 2k$ with $0\leq r <2n$, we choose $k$ of the $m$ representations of degree $2$ to come up with a minimal tuple;
there are $\binom{m}{k}$ ways to do this, so that is the value of $m_r$, and $S_r = k = \frac{r}{2}$.
For an odd $r = 2k+1$, we again choose $k$ of the degree $2$ representations, and then one of degree $1$;
there are therefore $m_r=4\binom{m}{k}$ ways to do this, each one with $S_r = k+1 = \frac{r+1}{2}$.
Hence we have that the leading term of the polynomial $|X(n,q)|$ is
$\binom{m}{k}q^{n^2(1-a^{-1}) - \frac{r}{2} +r^2a^{-1}}$ if $r=2k$ is even
and $4\binom{m}{k}q^{n^2(1-a^{-1}) - \frac{r+1}{2} +r^2a^{-1}}$ if $r=2k+1$ is odd.

\subsection{The symmetric group $\mathfrak{S}_4$}
If $A=\mathfrak{S}_4$, which has order $a=24$, then
any field of characteristic not $2$ or $3$ is a splitting field for $A$, and
the degrees of the irreducible representations over such a field are $1$, $1$, $2$, $3$ and $3$.
According to our algorithm we need to determine the minimal tuples for all $0 \leq r < 24$.
The relevant data is summarised in the following table.

\renewcommand\arraystretch{1.4}
\smallskip
\begin{center}
	\begin{tabular}{| c | c  | c | c | c || c | c | c | c | c |} \hline
		$r$ 	& $m_r$		& \text{sample tuple}	& $S_r$ & $\epsilon_r$	     & $r$	& $m_r$	& \text{sample tuple }	& $S_r$	& $\epsilon_r$	\\ \hline
		$0$ 	& $1$		& $(0,0,0,0,0)$		 	& $0$	& $0$			     & $12$	& $4$	& $(1,0,1,2,1)$			& $7$	& $1$			\\
		$1$		& $2$		& $(1,0,0,0,0)$			& $1$	& $\frac{23}{24}$	 & $13$	& $2$	& $(1,1,1,2,1)$			& $8$	& $\frac{23}{24}$			\\
		$2$		& $1$		& $(0,1,0,0,0)$			& $1$	& $\frac{5}{6}$		 & $14$	& $1$	& $(0,0,1,2,2)$			& $9$	& $\frac{5}{6}$			\\
		$3$		& $2$		& $(0,0,0,1,0)$			& $1$	& $\frac{5}{8}$		 & $15$	& $2$	& $(1,0,1,2,2)$			& $10$	& $\frac{5}{8}$			\\
		$4$		& $4$		& $(1,0,0,1,0)$			& $2$	& $\frac{4}{3}$		 & $16$	& $1$	& $(1,1,1,2,2)$			& $11$	& $\frac{1}{3}$			\\
		$5$		& $2$		& $(0,0,1,1,0)$			& $2$	& $\frac{23}{24}$	 & $17$	& $2$	& $(1,0,2,2,2)$			& $13$	& $\frac{23}{24}$			\\
		$6$		& $1$		& $(0,0,0,1,1)$			& $2$	& $\frac{1}{2}$		 & $18$	& $1$	& $(1,1,2,2,2)$			& $14$	& $\frac{1}{2}$			\\
		$7$		& $2$		& $(1,0,0,1,1)$			& $3$	& $\frac{23}{24}$	 & $19$	& $2$	& $(1,1,1,3,2)$			& $16$	& $\frac{23}{24}$			\\
		$8$		& $1$		& $(0,0,1,1,1)$			& $3$	& $\frac{1}{3}$		 & $20$	& $4$	& $(1,0,2,3,2)$			& $18$	& $\frac{4}{3}$			\\
		$9$		& $2$		& $(1,0,1,1,1)$			& $4$	& $\frac{5}{8}$		 & $21$	& $2$	& $(1,1,2,3,2)$			& $19$	& $\frac{5}{8}$			\\
		$10$	& $1$		& $(1,1,1,1,1)$			& $5$	& $\frac{5}{6}$		 & $22$	& $1$	& $(1,1,1,3,3)$			& $21$	& $\frac{5}{6}$			\\
		$11$	& $2$		& $(0,0,1,2,1)$			& $6$	& $\frac{23}{24}$	 & $23$	& $2$	& $(1,0,2,3,3)$			& $23$	& $\frac{23}{24}$			\\ \hline
	\end{tabular}
\end{center}
\smallskip

Of note here is the fact that in this case for every $r$ all the minimal tuples are eligible tuples,
and hence for this example the value of $N=0$ (so our result is valid for all $n$).
It is relatively straightforward to show that this is a general phenomenon which occurs when the degrees of the irreducible representations for $A$ can be put into an ordered list $1 = d_1 \leq d_2 \leq \cdots \leq d_s$ with $d_i - d_{i-1} \leq 1$ for $1 < i \leq s$.

It is also worth noting that this small example already shows that
the value of the ``error term'' $\epsilon_r$ can be greater than $1$, so finding the degree of the polynomial
$|X(n,q)|$ is more complicated than simply taking the integer part of $n^2(1-a^{-1})$.

\subsection{Further Examples}
Let $A=\mathfrak{S}_5$, so $a=120$. Any field of characteristic larger than $5$ is a splitting field for $\mathfrak{S}_5$, and
the degrees of the irreducible representations over such a field are $1,1,4,4,5,5$ and $6$.
When $r=3$, the tuple $(0,-1,1,0,0,0,0)$ is one of four minimal tuples (the others being those naturally obtained from this one
by permuting amongst entries of equal degree).
Hence we cannot take $N=0$ as we have negative entries for at least one value of $r$.
In this case, the value of $N$ provided by our algorithm is $N = a = 120$.
For similar reasons, when $A=\mathfrak{S}_6$, we also need to go up to $N=a=720$.
We have also calculated directly that value of $N$ for all groups of order $a \leq 80$ is either $0$ or $a$.

\section{Extensions and Further Remarks}\label{sec:extensions}

In this section we outline various ways to extend the work presented, either by relaxing some of the standing assumptions made in Section \ref{sec:prelims} or by changing the groups involved.
We also point out an application of this work to the study of representation varieties.
We begin by discussing the restrictions we have placed on the field $\FF_q$.

\subsection{The assumption that $\FF_q$ is a splitting field}
We have had the standing assumption that $\FF_q$ is a splitting field for $A$.
This means, in particular, that the characteristic $p$ of $\FF_q$
is coprime to the size $a$ of the group $A$ and that every irreducible $\FF_qA$-module is absolutely irreducible.
This assumption allows us to assume that all modules encountered are semisimple and that the irreducible summands encountered do not really depend on the field in any essential way.

Suppose we relax the assumption that all irreducible modules are absolutely irreducible, but retain for now the assumption that $q$ and $a$ are coprime
(this is the situation in \cite{ls}, for example).
Then we can still write down a basic set $(N_1,\ldots,N_t)$, say, of irreducible $\FF_qA$-modules.
It follows from Schur's Lemma that $\End_{\FF_qA-{\rm mod}}(N_i)$ is a division ring for each $i$,
and since any finite division ring is a field it is not hard to
see that we have $\End_{\FF_qA-{\rm mod}}(N_i) \simeq \FF_{q^{e_i}}$ for some $e_i\geq 1$.
Moreover, if we extend scalars to $\FF_{q^{e_i}}$,
then the module $N_i$ splits into a direct sum of $e_i$ absolutely
irreducible $\FF_{q^{e_i}}A$-modules, $(M_{i1},\ldots,M_{ie_i})$ say, which form a single orbit under the action of the Galois group
${\rm Gal}(\FF_{q^{e_i}}/\FF_{q}) \simeq \ZZ_{e_i}$.
Denote the dimension of each $M_{ij}$ over $\FF_{q^{e_i}}$ by $d_{ij}$, and note that $d_{ij} = d_{i1}$ for all $1\leq j \leq e_i$.
Note also that the $\FF_q$-dimension of $N_i$ must be $d_{i1}e_i$.
Conversely, given an absolutely irreducible $\FF_{q^e}A$-module $M$ over some extension $\FF_{q^e}$ of $\FF_q$,
taking the direct sum of the distinct ${\rm Gal}(\FF_{q^e}/ \FF_q)$-
conjugates of $M$ forms a module which arises from precisely one of the
$N_i$ by extension of scalars from $\FF_q$ to $\FF_{q^e}$.
For justification of the claims above, see results in \cite[Sec.\ 7]{curtisreiner}, in particular Cor. 7.11 and Prop. 7.18.

In this way, one can retrieve all of the information necessary to mimic the proofs and constructions in
Section \ref{sec:theory} over $\FF_q$.
In particular, the degrees $d_{ij}$ occurring above (with their multiplicities $e_i$) are precisely the degrees of the distinct
representatives of the isomorphism classes of absolutely irreducible $KA$-modules over some sufficiently large extension
$K$ of $\FF_q$ (it suffices to extend to a finite field containing all the $\FF_{q^{e_i}}$).
We therefore have that $\sum_{i=1}^t\sum_{j=1}^{e_i} d_{ij}^2 = a$.
Within this set-up, one can still calculate stabilisers of representations -- we see direct products of $\GL_n(q^{e_i})$s --
and one can analyse $t$-tuples of integers $(m_1,\ldots,m_t)$ such that $\sum_{i=1}^t m_id_{i1}e_i = n$.
This amounts to the same thing as analysing those $s$-tuples of integers $(n_{11},\ldots,n_{te_t})$ such that $\sum_{i=1}^s\sum_{j=1}^{e_i} n_{ij}d_{ij} = n$
subject to the additional constraint that $n_{i1} = n_{ij}$ for all $1\leq j \leq e_i$.
Presenting it in this way shows that, given the degrees $d_{ij}$, we actually need to identify a subset of the eligible $s$-tuples for the number $n$
from Section \ref{sec:theory} -- those tuples satisfying the given additional constraint.
The analysis in Section \ref{sec:theory} now proceeds almost unchanged: certain of these tuples will be minimal, and after a certain point, all minimal
tuples will be eligible.
After this point, the leading coefficient and power of $q$ in the polynomial $|X(n,q)|$ will only depend
on the value of $n$ modulo $a$.
Furthermore, when $n$ is divisible by $a$, the unique minimal tuple described in the proof of Proposition \ref{prop:mainstatement}(i) actually
satisfies the additional constraint given, so we get a leading term $q^{n^2(1-a^{-1})}$ in this case.

The preceding remarks are perhaps made more transparent with a couple of simple examples.
We maintain notation from previous sections.
\begin{exmp}
Let $d$ be an odd positive integer, let $q=2^d$, and let $A$ be the cyclic group of order $3$, generated by the element $x$ of order $3$, say.
We need three distinct cube roots of $1$ to realise all the absolutely irreducible representations of $A$;
since $d$ is odd, $\FF_q$ does not contain three distinct cube roots of $1$, but $\FF_{q^2}$ does.
Over $\FF_q$, $A$ has the trivial representation $N_1$ and another irreducible representation $N_2$ of degree $2$;
$N_2$ can be realised concretely by sending $x$ to the matrix $\left(\begin{array}{cc} 1&1\\1&0 \end{array}\right)$
(note that this matrix does have order $3$ when the characteristic is $2$, and it has characteristic polynomial $X^2+X+1$, so is not diagonalisable
unless the field contains a nontrivial cube root of $1$).
If we extend scalars by adjoining a root of $X^2+X+1$ (i.e., move to the field $\FF_{q^2}$), then $N_2$ splits into two one-dimensional
modules.

Now, we have only have one representation of $A$ over $\FF_q$ of degree $1$ -- the trivial representation --
so we have in this case that that $|X(1,q)| = 1$.
On the other hand, if we extend scalars to $\FF_{q^2}$, we get $|X(1,q^2)| = 3$.
In fact, for any $n \in \NN$, we have
$$
|X(3n+1,q)| = q^{\frac{2}{3}(3n+1)^2 - \frac{2}{3}} + \textrm{ lower order terms},
$$
whereas
$$
|X(3n+1,q^2)| = 3(q^2)^{\frac{2}{3}(3n+1)^2 - \frac{2}{3}} + \textrm{ lower order terms}.
$$
This shows how the leading coefficient of $|X(n,q)|$ can vary with $q$ when we work with fields which are not necessarily splitting fields.
\end{exmp}

\begin{exmp}
Let $q=3$ and let $A$ be the dihedral group of order $10$.
Over $\FF_3$ there are three irreducible representations of $A$: two of dimension $1$, $N_1$ and $N_2$, say, and a single $4$-dimensional representation, say $N_3$.
If we extend scalars to $\FF_9$, then $N_4$ splits into the two familiar two-dimensional representations
(those we see ``generically'' by considering dihedral groups as groups of plane rotations and reflections);
call these two-dimensional modules $M_3$ and $M_3'$, and denote the
modules given by $N_1$ and $N_2$ after extension of scalars by $M_1$ and $M_2$.

Over $\FF_3$ there are four ways to build a representation of degree $3$ -- $N_1\oplus N_1 \oplus N_1$ or $N_1\oplus N_1 \oplus N_2$ or $N_1\oplus N_2 \oplus N_2$ or $N_2\oplus N_2 \oplus N_2$
-- and amongst these the middle two correspond to minimal tuples.
For these, the value of our error term $\epsilon_r = 1^2 + 2^2 - \frac{9}{10} = \frac{21}{10}$.
On the other hand, if we work over $\FF_9$, then we can build a $3$-dimensional representation $M_1\oplus M_3$, for example, for which the error term is $1^2 + 1^2 - \frac{9}{10} = \frac{11}{10}$.
Hence, whenever $n$ is congruent to $3$ modulo $10$,
$$
\deg (|X(n,3)|) = \frac{9}{10}n^2 - \frac{21}{10},
$$
whereas
$$
\deg(|X(n,9)|) = \frac{9}{10}n^2 - \frac{11}{10}.
$$
This shows how the degree of the polynomial $|X(n,q)|$ can vary with $q$ when we work with fields which are not necessarily splitting fields.
\end{exmp}

Progress is even possible if we drop the assumption that $q$ and $a$ are coprime, so that not all $\FF_qA$-modules are semisimple.
Some sort of semisimplicity assumption is necessary, as is shown by \cite[Example 1]{ls},
but it is possible to make progress in the modular case if one is willing to replace the set
$\Hom(A,\GL_n(q))$ with the set $\Hom_{cr}(A,\GL_n(q))$ of all homomorphisms from $A$ to $\GL_n(q)$ such that the associated representation is semisimple
(completely reducible);
this is the standing assumption in the paper \cite{bate}, for example.
If one does this, then similar results are possible to those in Section \ref{sec:theory},
but one has to work a bit harder.
For example, instead of results depending on $a$,
one has to use the dimension of the socle of the group algebra $\FF_qA$ (denote this dimension by $b$),
and one cannot hope for the results obtained to be independent of $q$.
However, one should still expect that the leading term of the
polynomial $|\Hom_{cr}(A,\GL_n(q))|$ will have the form $m_rq^{n^2(1-b^{-1})-\epsilon_r}$,
where $r$ is the value of $n$ modulo $b$, and $m_r$ and $\epsilon_r$ can be determined
by procedures similar to those laid out in Secion \ref{sec:theory}.

\subsection{Changing the target group}
The main point of the paper \cite{bate} is to produce bounds similar to those in \cite{ls},
replacing $\GL_n(q)$ with a unitary, symplectic or orthogonal group,
see \cite[Thm.\ B, Thm.\ C, Thm.\ D]{bate}.
This is achieved at the expense of knowing a bit more information about the simple modules for the group $A$; for
example, one needs to know how many of the simple modules are self-dual.
However, armed with this knowledge, an approach similar to that given in this note would produce
similar results for these cases too.

\subsection{Dimensions of representation varieties}
Let $K$ be an algebraically closed field, and let $G=\GL_n(K)$.
The set $X:=\Hom(A,G)$ of homomorpisms from $A$ to $G$
is an example of a \emph{representation variety} \cite{lubotzkymagid}
(it can be realised as a closed subvariety of the $a$-fold cartesian product $G^a$).
The linear algebraic group $G$ acts on $X$ by restriction of the simultaneous conjugation
action on $G^a$ and, as is observed in \cite[Sec.\ 2]{ls},
under the assumption that $K$ has characteristic zero or coprime to $a$,
the $G$-orbits in $X$ are the irreducible components of $X$.
The dimension of such an orbit is the dimension of $G$ minus the dimension of the associated stabilizer.
The analysis in this paper shows that the maximal dimension arising is precisely $n^2(1-a^{-1}) - \epsilon_r$,
where the notation is that in Proposition \ref{prop:mainstatement}, and this is therefore the dimension of $X$.
Moreover, the number $m_r$ is precisely the number of irreducible components of maximal dimension in $X$.

%%%%%%%%%%%%%%%%%%%%%%%%%%%%%%%%%%%%%%%%%%%%%%%%%%%%%%%%%%%%%%%%%%%%%%
%%%%%%%%%%%%% bibliography
%%%%%%%%%%%%%%%%%%%%%%%%%%%%%%%%%%%%%%%%%%%%%%%%%%%%%%%%%%%%%%%%%%%%%%


\begin{thebibliography}{00}
\bibitem{bate} M.E.~Bate,
\emph{The number of homomorphisms from finite groups to classical groups},
J. Algebra \textbf{308} (2007), no. 2, 612–-628.

\bibitem{chig} N.~Chigira, Y.~Takegahara, T.~Yoshida, 
\emph{On the number of homomorphisms from a finite group to a general linear group}, 
J. Algebra \textbf{232 }(2000), no. 1, 236--254. 

\bibitem{curtisreiner} C.W.~Curtis, I.~Reiner,
\emph{Methods of representation theory -- with applications to finite groups and orders}, Vol. I, Wiley, New York, 1981.

\bibitem{lubotzkymagid} A.~Lubotzky, A.R.~Magid,
\emph{Varieties of representations of finitely generated groups},
Mem. Amer. Math. Soc. \textbf{58} (1985), no. 336.

\bibitem{ls} M.W.~Liebeck, A.~Shalev,
\emph{The Number of Homomorphisms from A Finite Group To a General Linear Group},
Communications in Algebra \textbf{32} (2004), 657--661.

\end{thebibliography}
\end{document}